\definecolor{st.patricksblue}{rgb}{0.14, 0.16, 0.48}
	\definecolor{indigo(web)}{rgb}{0.29, 0.0, 0.51}
\definecolor{sacramentostategreen}{rgb}{0.0, 0.34, 0.25}
\patchcmd{\BR@backref}{\newblock}{\newblock(Cited on page~}{}{}
\patchcmd{\BR@backref}{\par}{)\par}{}{}
\newcommand{\ds}[1]{\ensuremath{\mathds{#1}}}
\newcommand{\upa}[1]{\ensuremath{\upsilon^{{#1}/p^\infty}}}
\newcommand{\klr}{\ensuremath{K\lr{\upsilon^{\pm1/p^\infty}}}}
\newcommand{\kl}{\ensuremath{K\lr{\upsilon^{1/p^\infty}}}}
\newcommand{\curly}[1]{\ensuremath{\mathscr{#1}}}
\newcommand{\pro}{\ensuremath{\ds{P}_K^{1,\mathrm{ad,perf}}}}
\newcommand{\aro}{\ensuremath{\ds{A}_K^{1,\mathrm{ad,perf}}}}
\newcommand{\arom}{\ensuremath{\ds{A}_K^{m,\mathrm{ad,perf}}}}
\newcommand{\tio}[1]{\ensuremath{{#1}^\times}}
\newcommand{\lr}[1]{\left\langle{#1}\right\rangle}
\newcommand{\id}[1]{\ensuremath{{\mathfrak{#1}}}}
\newcommand{\abs}[1]{\ensuremath{{\left\vert{#1}\right\vert}}}
\newcommand{\bs}{\ensuremath{\backslash}}
\newcommand{\xra}[1]{\xrightarrow{#1}}
\newcommand{\ra}{\rightarrow}
\newcommand{\til}[1]{\widetilde{#1}}
\newtheorem{theorem}{Theorem}[section]
\newtheorem{proposition}[theorem]{Proposition}
\newtheorem{corollary}[theorem]{Corollary}
\newtheorem{mdef}[theorem]{Definition}
\let\olddefinition\mdef
\renewcommand{\mdef}{\olddefinition\normalfont}
\let\oldexample\exam
\renewcommand{\exam}{\oldexample\normalfont}
\let\oldremark\rem
\renewcommand{\rem}{\oldremark\normalfont}
\begin{document}

\title{Vector Bundles over Projectivoid Line}
\author{Harpreet Singh Bedi~~~{bediha@dickinson.edu}}

\maketitle

\begin{abstract}
In this article we describe vector bundles over projectivoid line and show how it is similar to (and different) from Gorthendieck's classification of vector bundles over projective line.
\end{abstract}

\section*{Acknowledgement} I am very grateful to Kiran Kedlaya for suggesting this topic and for critical comments. I am responsible for any remaining errors in the article.

\tableofcontents

\section{Introduction}
In this article we describe vector bundles over projectivoid line $\pro$ in Propostion \ref{prop:f1}. The description will be similar to vector bundles on $\ds{P}^1$ as described in \cite{martin2} Proposition 2.3. We reproduce the proposition here (word for word),

\begin{proposition}\label{prop:martin_2_3}
\normalfont
Isomorphism classes of $m$ dimensional algebraic vector bundles over $\ds{P}^1$ correspond bijectively to equivalence classes of polynomial $m\times m$ matrices $A(s,s^{-1})$ over $k[s,s^{-1}]$ such that $\det A(s,s^{-1})=s^n,n\in\ds{Z}$ where equivalence relation is the following: $A(s,s^{-1})\sim A'(s,s^{-1})$ iff there exist polynomial invertible $m\times m$ matrices $U(s),V(s^{-1})$ over $k[s]$ and $k[s^{-1}]$ respectively with constant determinant such that

\begin{equation}
A'(s,s^{-1})=V(s^{-1})A(s,s^{-1})U(s).
\end{equation}
\end{proposition}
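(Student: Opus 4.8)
\begin{prf}[Proof sketch]
The plan is the classical one: trivialize the bundle over the two standard affine charts of $\ds{P}^1$, record the resulting transition matrix, and then translate isomorphisms of bundles into the stated matrix equivalence.

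First I would fix the cover $U_0 = \spc k[s]$ and $U_\infty = \spc k[s^{-1}]$ of $\ds{P}^1$, with overlap $U_0 \cap U_\infty = \spc k[s,s^{-1}]$. Given a rank $m$ algebraic vector bundle $E$, the restrictions $E|_{U_0}$ and $E|_{U_\infty}$ are finitely generated projective modules over the principal ideal domains $k[s]$ and $k[s^{-1}]$, hence free of rank $m$. A choice of trivializations $\varphi_0,\varphi_\infty$ identifies the gluing datum of $E$ with a single matrix $A(s,s^{-1}) = \varphi_\infty\varphi_0^{-1} \in \mathrm{GL}_m(k[s,s^{-1}])$ over the overlap, and conversely every such matrix glues the two trivial bundles into a bundle $E_A$. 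Replacing $\varphi_0$ by $U(s)\varphi_0$ with $U(s) \in \mathrm{GL}_m(k[s])$ and $\varphi_\infty$ by $V(s^{-1})\varphi_\infty$ with $V(s^{-1}) \in \mathrm{GL}_m(k[s^{-1}])$ turns $A$ into $V A U^{-1}$; more generally an isomorphism $E_A \xra{\sim} E_{A'}$ is exactly a pair $(V,U)$ of such matrices with $A' = V A U$ (relabeling $U^{-1}$ as $U$). This already yields a bijection between isomorphism classes of rank $m$ bundles and $\mathrm{GL}_m(k[s,s^{-1}])$ modulo the relation $A \sim V A U$.

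It remains to put this in the stated normal form. The units of $k[s,s^{-1}]$ are precisely the monomials $c\,s^n$ with $c \in k^\times$ and $n \in \ds{Z}$, so $\det A$ is automatically of the form $c\,s^n$; multiplying $A$ on one side by the constant matrix $\mathrm{diag}(c^{-1},1,\dots,1)$, which lies in both $\mathrm{GL}_m(k[s])$ and $\mathrm{GL}_m(k[s^{-1}])$, normalizes $\det A$ to $s^n$ without changing the class of $E_A$. Moreover the determinant of any invertible matrix over $k[s]$ (resp.\ $k[s^{-1}]$) is a unit of that polynomial ring, hence a nonzero constant, so the ``constant determinant'' clause on $U,V$ is automatic; applying the same constant-diagonal adjustment after an equivalence shows that two normalized matrices represent isomorphic bundles exactly when they are related by some $A' = V A U$. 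Hence the bijection descends to the equivalence classes of the proposition.

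The sole input beyond the two-element \v{C}ech/gluing formalism is the freeness of $E|_{U_0}$ and $E|_{U_\infty}$, i.e.\ that finitely generated projective modules over a principal ideal domain are free. The one point I expect to need care is the compatibility of the normalization $\det A = s^n$ with the equivalence $A \sim V A U$: since $\det V$ and $\det U$ are nonzero constants that need not be mutually inverse, one must re-normalize by a constant diagonal matrix after applying an equivalence, exactly as above. Once that is noted, the two descriptions coincide.
\end{prf}
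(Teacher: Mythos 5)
Your sketch is correct and is exactly the standard two-chart gluing argument (freeness of finitely generated projectives over the PIDs $k[s]$ and $k[s^{-1}]$, transition matrix modulo change of trivialization, normalization of the determinant using that the units of $k[s,s^{-1}]$ are $c\,s^n$); the paper itself offers no proof of this proposition, quoting it verbatim from its reference, and the construction it carries out later for $\pro$ mirrors precisely the \v{C}ech formalism you describe. One minor simplification: once both $A$ and $A'$ are normalized to have determinant $s^n$, any relation $A'=V A U$ forces $\det V\det U=1$ automatically, so no re-normalization is needed in that direction.
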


\section{Notation}
We start by defining the following \klr $K$-Algebras (perfectoid algebras). Let $a,b\in\ds{N}\cup\{0\}$ and $i=a/p^b$,then we define

\begin{align}
K\lr{\upsilon^{1/p^\infty}}&:=\sum_{a,b}c_{(a,b)}\upsilon^{i}, ~~c_{(a,b)}\in K,~ \lim_{a+b\ra\infty}|c_{(a,b)}|\ra0\\
K\lr{\upsilon^{-1/p^\infty}}&:=\sum_{a,b}c_{(a,b)}\frac{1}{\upsilon^{i}}, ~~ c_{(a,b)}\in K,~ \lim_{a+b\ra\infty}|c_{(a,b)}|\ra0\\
K\lr{\upsilon^{\pm1/p^\infty}}&:=\text{ Generated by }\alpha,\beta \text{ where } \alpha
\in K\lr{\upsilon^{1/p^\infty}}\text{ and }\beta\in K\lr{\upsilon^{-1/p^\infty}}
\end{align}
For the ease of notation we will write $c_i$ (or even $a_i$) in place of $c_{(a,b)}$. 

It is possible to put an order on the objects defined above. The order is not important but we give one such order below. Another such order can be obtained using the Calkin-Wilf Sequence as given in \cite{Cal_Wi}.

\subsection{Order and Grading}\label{sec:grading} 
Polynomials come equipped with standard grading, but here we are working with power series with degree of individual terms of the form $a/p^b\in\ds{Q}$ where $a,b\in\ds{Z}$ and $p$ a prime. We have to fix a convention for expressing terms as summation, and we make sure that there are finitely many terms in each grading. First we grade $K[\upsilon,\upsilon^{1/p},\upsilon^{1/p^2},\ldots]$. 

Consider antidiagonal in the first quadrant, it consists of terms $(a,b)$ with $a,b\in\ds{N}\cup\{0\}$. The sum of the terms is fixed say $k\in\ds{N}\cup\{0\}$. For example, corresponding to $k=3$ we have the following tuples $(0,3),(1,2),(2,1),(3,0)$ as $(a,b)$, and every antidiagonal has a fixed number of terms in the first quadrant. We will use this as a model for grading. The term $(a,b)$ will correspond to $\upsilon^{a/p^b}$. The terms on the $x$ axis of the form $(a,0)$ give us the grading on the polynomial in $\upsilon$, and as we go to higher and higher antidiagonal we keep recovering higher powers of $1/p$. The vertical line $x=1$  gives us just the powers of $\upsilon$ in $1/p$. As the reader would have noticed, the notation follows the proof of countability of rationals, skipping any duplicate terms.

Our polynomials are finite sums of the form
\begin{equation}
\sum_{a+b=i}a_i\upsilon^{a/p^b}, ~~a,b,i\in\ds{N}\cup\{0\}, a_i\in K
\end{equation} 
 (there is no relation between $a_i$ and $a$) and can be clearly extended to power series by making the sum infinite, we denote power series by $K\lr{\upsilon}$.  In case of power series we also add an extra condition that $|{a_i}|\ra 0$ as $i\ra \infty$. Laurent polynomials can be added by duplicating the above sum (we will still have finitely many terms in the antidiagonal and thus grading).

\section{Vector Bundles over \pro} The projectivoid line is covered by $\id{U}=U_1\cup U_2$, with $\curly{O}(U_1)=K\lr{\upsilon^{1/p^\infty}}$  and $\curly{O}(U_2)=K\lr{\upsilon^{-1/p^\infty}}$. Where $U_i$ is the perfectoid affine space \aro and $\curly{O}(U_1\cap U_2)=K\lr{\upsilon^{\pm1/p^\infty}}$. The sheafiness comes from Theorem 3.6.15 of \cite{Kiran_Liu_1}.

Let $E$ be a $m$-bundle over \pro defined over $K$.

There are two trivilizations of this bundle over the cover corresponding to $U_1$ and $U_2$. The trivialization is of the form $U_i\times \arom=\aro\times \arom$. Let $s\in\aro$ and $v\in\arom$. To construct the projectivoid we identify the perfectoid affine spaces via the map $s\mapsto 1/s,s\neq 0$. Now we can glue the two trivializations of the vector bundle to get a vector bundle over the projectivoid space. 

\begin{align*}
U_1\bs\{0\}\times\arom &\ra U_2\bs\{0\}\times\arom\\
(s,v)&\mapsto (s^{-1},A(s,s^{-1})v)\\
\end{align*}
where $A(s,s^{-1})$ is a matrix with coefficients in $\curly{O}(U_1\cap U_2)=K\lr{\upsilon^{\pm1/p^{\infty}}}$. For the correspondence to hold this matrix must have a determinant that is a unit in the ring. The determinant is a power series.

\begin{equation}\label{eq:trans_matrix}
\det (A(s,s^{-1}))\in K\lr{\upsilon^{\pm1/p^{\infty}}}\text{ and }\det (A(s,s^{-1}))\neq 0\text{ for all     }~~\upsilon
\end{equation}
As we see in the next section \ref{sec:Poly_and_power_series} the units of \eqref{eq:trans_matrix}  are given as 

\begin{equation}\label{eq:det_trans_matrix_1}
{\upsilon}^{n/p^b}\cdot v ,~~~n\in\ds{Z}, b\in \ds{Z}_{>0},
\end{equation}
where $v$ is degree zero term of $\klr$.

Notice that if we restrict to the case of $k[s,s^{-1}]$ we end up getting determinant as $s^n$, as in the proposition \ref{prop:martin_2_3}

\section{Polynomials and Power Series} \label{sec:Poly_and_power_series}The units in the ring $K[X]$ are precisely $\tio{K}$, and for the laurent polynomials $K[X,X^{-1}]$ the units are $uX^n,u\in \tio{K}$.

In the case of power series $K[[X]]$ the units are formal power series with non zero constant term.

\begin{equation}
\sum_{n=0}^\infty a_nx^n\in K[[X]]\text{ is a unit iff  }a_0\neq 0.
\end{equation}
In the case of formal Laurent series $K((X))$, we notice that $X$ is a unit, since $X^{-1}\cdot X=1$. The set of units is ${K((X))}\bs{0}$, the proof can be seen in \cite{mo1}. In \cite{SCHWAIGER} we find the complete description of roots of power series.

For a series $f$ in Tate Algebra $T_n$, the series is a unit iff the constant coefficient of $f$ is bigger than all other coefficients of $f$ ( \cite{bosch2014lectures} page 14, Corollary 4). For $T_n:=K\lr{\upsilon_1,\ldots,\upsilon_n}$ the units are 

\begin{equation}\label{eq:tate_unit}
\tio{T_n}=\left\{\sum_{i\in\ds{N}^n} a_i\upsilon^i\in T_n ~:~|a_0|> |a_i|\text{ for all }i\neq0     \right\}
\end{equation}
Equipping $f\in T_n$ with a Gauss norm, $|f|=1$ is a unit iff  the reduction of $f$ denoted as $\til{f}$ lies in $ \tio{K}$ as described on  ( \cite{bosch2014lectures} page 13, page 14, Corollary 4).
\subsection{Units of $K\lr{\upsilon^{1/p^\infty}}$}
We now formally write down the units of $K\lr{\upsilon^{1/p^\infty}}$ which will be used for the description of vector bundles.
\begin{proposition}
 $K\lr{\upsilon^{1/p^\infty}}$ is complete with respect to Gauss Norm.
\end{proposition}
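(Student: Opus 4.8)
The plan is to run the standard ``space of null sequences is complete'' argument (much as for the classical Tate algebra, cf.\ \cite{bosch2014lectures}), viewing $\kl$ as the set of formal series whose coefficients, indexed by the exponent set $\{a/p^b:a,b\in\ds{N}\cup\{0\}\}$ and organized along the antidiagonal grading of Section~\ref{sec:grading}, tend to $0$. First I would record the Gauss norm explicitly: for $f=\sum_{a,b}c_{(a,b)}\upsilon^{a/p^b}$ set $|f|=\sup_{a,b}|c_{(a,b)}|$. Because $|c_{(a,b)}|\ra 0$ as $a+b\ra\infty$ and each antidiagonal $\{a+b=k\}$ is finite, only finitely many coefficients exceed any given positive number, so the supremum is attained and $|\cdot|$ is a well-defined nonarchimedean norm on $\kl$. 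Concretely, ``$f\in\kl$'' unwinds to ``for every $\varepsilon>0$, all but finitely many $c_{(a,b)}$ satisfy $|c_{(a,b)}|\leq\varepsilon$''.

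Next, given a Cauchy sequence $(f_n)_{n\geq1}$ with $f_n=\sum_{a,b}c^{(n)}_{(a,b)}\upsilon^{a/p^b}$, I would observe from $|f_n-f_m|=\sup_{a,b}|c^{(n)}_{(a,b)}-c^{(m)}_{(a,b)}|$ that for each fixed $(a,b)$ the sequence $(c^{(n)}_{(a,b)})_n$ is Cauchy in $K$, hence convergent (as $K$ is complete) to some $c_{(a,b)}\in K$; put $f:=\sum_{a,b}c_{(a,b)}\upsilon^{a/p^b}$. Two things then remain: that $f\in\kl$ and that $f_n\ra f$ in the Gauss norm. For membership, given $\varepsilon>0$ I pick $N$ with $|f_n-f_N|\leq\varepsilon$ for all $n\geq N$ and let $n\ra\infty$ coordinatewise, getting $|c_{(a,b)}-c^{(N)}_{(a,b)}|\leq\varepsilon$ for every $(a,b)$; since $f_N\in\kl$, the set $\{(a,b):|c^{(N)}_{(a,b)}|>\varepsilon\}$ is finite, and off this finite set the ultrametric inequality gives $|c_{(a,b)}|\leq\max(|c_{(a,b)}-c^{(N)}_{(a,b)}|,|c^{(N)}_{(a,b)}|)\leq\varepsilon$, which is exactly the condition placing $f$ in $\kl$. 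For convergence, fixing $n\geq N$ and letting $m\ra\infty$ in $|c^{(n)}_{(a,b)}-c^{(m)}_{(a,b)}|\leq\varepsilon$ yields $|c^{(n)}_{(a,b)}-c_{(a,b)}|\leq\varepsilon$ uniformly in $(a,b)$, i.e.\ $|f_n-f|\leq\varepsilon$ for $n\geq N$, so $f_n\ra f$.

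The only step that is not pure bookkeeping is the membership claim: it uses the \emph{uniformity} of the Cauchy condition (one $N$ serving all coordinates at once) to transport the ``coefficients $\ra 0$'' property from the $f_n$ to the limit $f$, something coordinatewise convergence alone does not give. The antidiagonal grading of Section~\ref{sec:grading} enters only to make the index set and the phrase ``$a+b\ra\infty$'' precise; the same argument applies verbatim to $K\lr{\upsilon^{-1/p^\infty}}$ and, with the two-sided index set, to $\klr$.
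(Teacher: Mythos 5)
Your argument is correct and is essentially the paper's own proof: both reduce to coefficientwise limits in the complete field $K$ and then use the uniformity of the Cauchy condition (a single $N$ working for all exponents at once) together with the finiteness of each antidiagonal to transfer the ``coefficients tend to $0$'' condition to the limit. The only cosmetic difference is that the paper phrases the Cauchy sequence as a series $\sum_i f_i$ with $f_i\ra 0$ (following the Tate-algebra proof in \cite{bosch2014lectures}), whereas you work with the sequence $(f_n)$ directly; in the nonarchimedean setting these are interchangeable, and your write-up of the membership step is if anything more explicit than the paper's.
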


\begin{proof} 
This proof is an adaptation of a similar proposition for Tate Algebras as given in  ( \cite{bosch2014lectures} page 14, Proposition 3).  We start with a Cauchy sequence $\sum_i f_i$ and end up showing that it lies in $ \kl$. We will use $v$ as an index for $(a,b)$, this will help as streamline the proof to make it closer to the proof of units of Tate Algebra.
\begin{equation}
\lim_{i\ra\infty}f_i =0 \text{ where }f_i=\sum_vc_{iv}\upsilon^v\in \kl
\end{equation}
First note that 
\begin{equation}
\abs{c_{iv}}\leq\abs{f_i}\text{ thus }\lim_{i\ra\infty}\abs{c_{iv}}=0 \text{ for all }v.
\end{equation}
Thus, the limit $c_v=\sum_{i=0}^\infty c_{iv}$ exists (note that we are using Gau\ss ~norm). 
To finish the proof we need to show that the series $f=\sum_vc_v\upsilon^v$ is strictly convergent and $f=\sum_if_i$.

In the section \ref{sec:grading} we put an order on $\kl$. In order to make our argument simpler, we jump a finite number of terms (we noticed in our ordering that there are only finite number of terms for every grading) in the order given in \ref{sec:grading} and consider terms of the form $(a,0)$ lying on the $x$ axis. This helps us in thinking directly in terms of natural numbers $\ds{N}$.

For any given $\epsilon>0$ there is an integer $N$ such that $\abs{c_{iv}}<\epsilon$ for $i\geq N$ and all $v$. Since coefficients  of the series $f_0,\ldots, f_{N-1}$ form a zero sequence, and almost all the coefficients of these sequences would have an absolute value less than $\epsilon$. Thus, the elements $\abs{c_{iv}}$ form a zero sequence in $K$. 
Since the non Archimedean triangle inequality generalizes for a convergent series to an inequality below \begin{equation}
\abs{\sum_{i=0}^\infty\alpha_v}\leq \max_{i=0,\ldots,\infty}\abs{\alpha_v}
\end{equation}
we get that power series $f=\sum_if_i$ and $f\in \kl$.
\end{proof}

\begin{corollary}
 A series $f\in \kl$ with $\abs{f}=1$ is a unit iff its reduction $\til{f}\in\tio{k}$.
\end{corollary}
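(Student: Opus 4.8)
The plan is to mirror the classical Tate-algebra argument (\cite{bosch2014lectures}, page~14, Corollary~4), now that the preceding Proposition supplies completeness of $R := \kl$ with respect to the Gauss norm. Write $R^{\circ} = \{\,f \in R : \abs{f} \le 1\,\}$ for the subring of power-bounded elements and $R^{\circ\circ} = \{\,f \in R : \abs{f} < 1\,\}$ for its ideal of topologically nilpotent elements, so that the reduction $\til{f}$ of an element $f \in R^{\circ}$ lives in $\wbr{R} := R^{\circ}/R^{\circ\circ}$.

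First I would pin down $\wbr{R}$ together with the behaviour of the Gauss norm. Reducing $f = \sum_{v} c_v \upsilon^{v} \in R^{\circ}$ coefficient by coefficient modulo $\mathfrak{m}_K$, the strict-convergence hypothesis $\abs{c_v} \to 0$ forces $c_v \in \mathfrak{m}_K$ for all but finitely many $v$, so the image is a \emph{polynomial}; this identifies $\wbr{R}$ with the ring $k[\upsilon^{1/p^\infty}] := k[\upsilon, \upsilon^{1/p}, \upsilon^{1/p^2}, \dots]$, the filtered union $\varinjlim_{b} k[\upsilon^{1/p^b}]$ of polynomial rings over the residue field $k$ of $K$. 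Since the unit group of each $k[\upsilon^{1/p^b}]$ is $\tio{k}$, the same holds for the union: $\tio{(k[\upsilon^{1/p^\infty}])} = \tio{k}$. I would also record that the Gauss norm is multiplicative on $R$: on the dense subring of polynomials $K[\upsilon^{1/p^\infty}]$ this follows by rescaling two elements to norm $1$ and using that the domain $\wbr{R} = k[\upsilon^{1/p^\infty}]$ has no zero divisors, so the reduction of the product is again nonzero and hence the product has norm $1$; multiplicativity then passes to the completion $R$ by continuity of multiplication and of the norm.

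For the \emph{if} direction, suppose $\abs{f} = 1$ and $\til{f} \in \tio{k}$. As $\til{f}$ is a nonzero constant, necessarily $\abs{c_0} = 1$ and $\abs{c_v} < 1$ for every $v \neq 0$; hence $c_0^{-1} f = 1 - g$ with $g \in R^{\circ\circ}$, i.e.\ $\abs{g} < 1$. Because $\abs{g^{n}} \le \abs{g}^{n} \to 0$ and $R$ is complete for the Gauss norm, the geometric series $\sum_{n \ge 0} g^{n}$ converges in $R$ and is a two-sided inverse of $1 - g$; therefore $f^{-1} = c_0^{-1} \sum_{n \ge 0} g^{n} \in R$. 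For the \emph{only if} direction, suppose $f g = 1$ in $R$. Multiplicativity of the Gauss norm gives $\abs{f}\,\abs{g} = 1$, so $\abs{g} = 1$ and $g \in R^{\circ}$; since reduction $R^{\circ} \to \wbr{R}$ is a ring homomorphism, $\til{f}\,\til{g} = 1$ in $\wbr{R} = k[\upsilon^{1/p^\infty}]$, so $\til{f} \in \tio{(k[\upsilon^{1/p^\infty}])} = \tio{k}$.

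The step I expect to demand the most care is the structural bookkeeping of the second paragraph: verifying that the strict-convergence condition $\abs{c_v} \to 0$ genuinely collapses the reduction $\wbr{R}$ onto the polynomial-type ring $k[\upsilon^{1/p^\infty}]$ — so that its unit group really is $\tio{k}$ and not something larger, as would happen for a power-series ring in which $1 - \upsilon$ is already invertible — and that the Gauss norm is honestly multiplicative on $R$, not merely submultiplicative. Once these two points are nailed down, the remainder is the standard geometric-series inversion together with the reduction-homomorphism argument above.
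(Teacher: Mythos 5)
Your proof is correct and follows essentially the same route as the paper's: identify the reduction of the norm-$\le 1$ subring with the polynomial ring $k[\upsilon^{1/p^\infty}]$ whose units are $\tio{k}$ for the forward direction, and invert $f$ via the geometric series $\sum g^i$ after normalizing so that $f=1-g$ with $\abs{g}<1$ for the converse. You supply two details the paper leaves implicit — multiplicativity of the Gauss norm (needed to see that the inverse of $f$ is again power-bounded) and the normalization by $c_0^{-1}$ — but the underlying argument is the same.
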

\begin{proof}
Without loss of generality we can consider only elements with $f\in \kl$ with Gau\ss ~norm 1. If $f$ is a unit in \kl it is also a unit in $R\lr{\upa{1}}$, where 

\begin{align}
R&=\{a\in K~|~ \abs{a}\leq 1\}\\
\id{m}&=\{a\in K~|~ \abs{a}< 1\}\\
k&=R/\id{m}\\
R\lr{\upa{1}}&\ra k[\upa{1}]\\
f&\mapsto \til{f}
\end{align}
Thus, $\til{f}$ is a unit in $k[\upa{1}]$ and hence in $f\in\tio{k}$. 

Conversely, if $\til{f}\in\tio{k}$, the constant term $f(0)$ satisfies $\abs{f(0)}=1$ (since $\til{f}=0$ iff $\abs{f}<1$). But then we can put $f=1-g$ with $\abs{g}<1$, giving us an inverse of $f$ as a series $\sum_{i=0}^\infty g^i$.

\end{proof}

In the above corollary we showed $f$ is of the type $f=1-g$ with $\abs{g}<1$. Thus, we can restate the above corollary as 

\begin{corollary}
An arbitrary series $f\in\kl$ is a unit iff $\abs{f-f(0)}<\abs{f(0)}$. In other words the absolute value of other coefficients of $f$ are less than the absolute value of the constant coefficient.
\begin{equation}\label{eq:1p_tate_unit}
\tio{\kl}=\left\{\sum_{i\in\ds{N}^n} a_i\upsilon^i\in \kl ~:~|a_0|> |a_i|\text{ for all }i\neq0     \right\}
\end{equation}
\end{corollary}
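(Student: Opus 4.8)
The plan is to deduce this from the preceding corollary (the characterization of norm-one units) by a rescaling argument, so that no normalization hypothesis on $f$ is needed. First I would note that for nonzero $f=\sum_v c_v\upsilon^v\in\kl$ the Gau\ss{} norm $\abs{f}=\max_v\abs{c_v}$ is actually \emph{attained}: since $\abs{c_v}\to 0$ along the order fixed in Section~\ref{sec:grading}, only finitely many coefficients can have absolute value exceeding any given positive bound, so $\abs{f}=\abs{c_{v_0}}$ for some index $v_0$ and in particular $\abs{f}\in\abs{\tio{K}}$. Hence I may pick $c\in\tio{K}$ with $\abs{c}=\abs{f}$ and set $g:=c^{-1}f$, which has Gau\ss{} norm $1$; since $c$ is a unit of $K\subseteq\kl$, the element $f$ is a unit if and only if $g$ is.

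Next I would apply the previous corollary to $g$: it is a unit iff its reduction $\til g\in k[\upa{1}]$ lies in $\tio{k}$. Here I would observe that $\til g$ is genuinely a polynomial, because the coefficients of $g$ form a zero sequence and so all but finitely many reduce to $0$, and that the only units of $k[\upa{1}]=\bigcup_n k[\upsilon^{1/p^n}]$ are the nonzero constants. Therefore $g$ is a unit iff $\til g$ is a nonzero constant, i.e. iff $\abs{g(0)}=1$ while $\abs{c_v^g}<1$ for every non-constant coefficient $c_v^g$ of $g$.

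Finally I would translate this back through the scaling $c_v^g=c^{-1}a_v$, where $f=\sum_v a_v\upsilon^v$. The condition becomes $\abs{a_0}=\abs{f}$ together with $\abs{a_v}<\abs{f}$ for all $v\neq0$; combined with $\abs{f}=\max_v\abs{a_v}$ this is equivalent to $\abs{a_0}>\abs{a_v}$ for all $v\neq0$, which is exactly \eqref{eq:1p_tate_unit}. Since $f-f(0)=\sum_{v\neq0}a_v\upsilon^v$ has Gau\ss{} norm $\max_{v\neq0}\abs{a_v}$, this is the same as $\abs{f-f(0)}<\abs{f(0)}$, giving the stated form. The only places needing a little care are the attainment of the supremum in the first step (so that $\abs{f}$ lies in the value group and the rescaling is legitimate) and the degenerate case $f(0)=0$, where no rescaling makes the constant term a unit, so $f$ is not a unit and correspondingly $\abs{f-f(0)}<\abs{f(0)}=0$ fails. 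I do not expect any genuine obstacle: the mathematical content is carried entirely by the preceding corollary, and the present statement is merely its scaling-invariant reformulation.
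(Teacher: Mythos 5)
Your argument is correct and takes essentially the same route as the paper: the paper presents this corollary as a direct restatement of the preceding one (having already declared ``without loss of generality $\abs{f}=1$'' there), and you likewise deduce it from that norm-one characterization. The only difference is that you make the reduction to the norm-one case explicit --- attainment of the Gau\ss{} norm, rescaling by a coefficient of maximal absolute value, and the degenerate case $f(0)=0$ --- details the paper leaves implicit.
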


We can carry the exact same procedure as above for $K\lr{\upsilon^{-1/p^\infty}}$ to get an identical result as stated in \ref{eq:1p_tate_unit}.

\subsection{Units of \klr}

We can consider algebra of the form $K\lr{X^{1/p^\infty},Y^{1/p^\infty}}$. An element $f\in K\lr{X^{1/p^\infty},Y^{1/p^\infty}}$ is a series in which each individual term has a degree $=\deg X+\deg Y$, where $X$ and $Y$ occur in the term. Thus, we can put an order on these terms as given in section \ref{sec:grading}. If we have terms which have only $X$ or only $Y$ appearing in them, we can still arrange them by degree. In case the degree of $X$ and $Y$ term is same, we put an order by first writing the $X$ term and then the $Y$ terms of the same degree. The order simply comes from observing that rational numbers are countable.

Using the results (and procedure) from the previous section we get the units of $K\lr{X^{1/p^\infty},Y^{1/p^\infty}}$ given below where $\xi$ represents product of $X$ and $Y$.
\begin{equation}\label{eq:2p_tate_unit}
\left\{\sum_{i\in\ds{N}^n} a_i\xi^i\in K\lr{X^{1/p^\infty},Y^{1/p^\infty}} ~:~|a_0|> |a_i|\text{ for all }i\neq0     \right\}
\end{equation}

Setting $X=\upsilon$ and $Y=1/\upsilon$  we know that elements of the form below 

\begin{equation}\label{eq:p_tate_unit}
\upsilon^{n/p^b}\left\{\sum_{i\in\ds{N}^n} a_i\upsilon^i\in \klr ~:~|a_0|> |a_i|\text{ for all }i\neq0     \right\}
\end{equation}

are units in $\klr$. The units we are most interested in are of the form $\upsilon^{n/p^b}\cdot u$ where $u$ is a degree zero term of \klr. We define the notion of degree in the definition given at \ref{def:degree} . 

It might seem that there are other units of $\klr$ that might not have a clearly defined notion of degree. 

For other units of $\klr$ we notice that the tail ends of series on both positive and negative side tend to zero. Thus, there are only finitely many terms that could be dominant. We can still define the degree to be maximum degree of all dominant terms (which are finite in number). In case we just have a polynomial with all the coefficients equal, then we have the degree is the power of the highest term, which is same as degree of polynomial in the classical case.

Hence we have a well defined notion of degree for units which might not be of the form \eqref{eq:p_tate_unit}. We can take the degree term out and write the unit as $\upsilon^{n/p^b}\cdot u'$, where the degree of $u'$ is zero.

\section{Isomorphism Classes of Vector Bundles over \aro} We now want to talk about vector bundle automorphism over the space $\aro\times\arom$. 

\begin{align*}
U_1\times\arom &\ra U_1\times\arom\\
(s,v)&\mapsto (s,U(s)v)\\
\end{align*}
where $U(s$) is a matrix with coefficients in $K\lr{\upsilon^{1/p^\infty}}$ and $\det(U(s))\neq 0$. From the section  \ref{sec:Poly_and_power_series} we get the units as
\begin{equation}\label{eq:det_trans_matrix1}
\left\{\text{ Elements of }K\lr{\upsilon^{1/p^{\infty}}}\text{ such that }~|a_0|> |a_i|\text{ for all }i\neq0     \right\}
\end{equation}
Notice that there is no gluing condition just on the piece $U_1$, therefore can have $\upsilon=0$. If we restrict this to $k[s]$ we just get $k\bs\{0\}$ as in the proposition \ref{prop:martin_2_3}.

 Similarly we have a correspondence on $U_2$

\begin{align*}
U_2\times\arom &\ra U_2\times\arom\\
(t,v)&\mapsto (t,V(t)v)\\
\end{align*}
where $V(t)$ is a matrix with coefficients in $K\lr{\upsilon^{1/p^\infty}}$ and $\det(V(t))\neq 0$. Notice that to obtain the projectivoid space we will have $t=1/s$. Thus, we write $V(s^{-1})$ in place of $V(t)$.

From the section \ref{sec:Poly_and_power_series} we get the units as
\begin{equation}\label{eq:det_trans_matrix2}
\left\{\text{ Elements of }K\lr{\upsilon^{-1/p^{\infty}}}\text{ such that }~|a_0|> |a_i|\text{ for all }i\neq0     \right\}
\end{equation}
Notice that there is no gluing condition just on the piece $U_2$, therefore can have $\upsilon=0$. If we restrict this to $k[t]$ we just get $k\bs\{0\}$ as in the proposition \ref{prop:martin_2_3}.

We want an equivalence relation for transition matrix between two covers. This can be obtained modulo the automorphisms $U(s),V(s^{-1})$ and is given as \eqref{eq:equivrel}.

\begin{align}
U_1\times\arom\xra{U(s)}&U_1\times\arom\xra{A(s,s^{-1})}U_2\times\arom\xra{V(s^{-1})}U_2\times\arom\nonumber\\
&U_1\times\arom\xra{A'(s,s^{-1})}U_2\times\arom\nonumber\\
&A'(s,s^{-1})=V(s^{-1})A(s,s^{-1})U(s)\label{eq:equivrel}
\end{align}

\section{Degree of Vector Bundles}

In this section we define the notion of degree of the vector bundles on the projectivoid line, which is motivated by proposition \ref{prop:martin_2_3}. In this proposition we have

\begin{align}
&\deg \det U(s)=0=\deg\det  V(s^{-1})\text{ which implies }\\
& \deg\det  A'(s,s^{-1})=\deg\det  A(s,s^{-1})
\end{align}
Thus, isomorphic vector bundles on $\ds{P}^1$ have the same degree of the determinant

\begin{mdef}\label{def:degree}
Degree of the vector bundle is the degree of zeroth term of the determinant of the vector bundle. 
\end{mdef}

The consequence of the above definition is that \eqref{eq:tate_unit} will have degree zero. Thus, \eqref{eq:det_trans_matrix1} and \eqref{eq:det_trans_matrix2} will also have degree zero. Furthermore the degree of \eqref{eq:det_trans_matrix_1} is $n/p^{\infty}$. From, \eqref{eq:equivrel} and the observations just made 
\begin{equation}
 \deg\det  A'(s,s^{-1})=\deg\det  A(s,s^{-1})
\end{equation}
Thus, isomorphic vector bundles on $\pro$ have the same degree of the determinant.

We have proved the following proposition

\begin{proposition}\label{prop:f1}
Isomorphism classes of $m$ dimensional analytic vector bundles over $\pro$ correspond bijectively to equivalence classes of $m\times m$ matrices $A(s,s^{-1})$ over $K\lr{\upsilon^{\pm1/p^{\infty}}}$ such that  
\begin{equation}
\det A(s,s^{-1})={\upsilon}^{n/p^b}\cdot\left\{\text{ Elements of }K\lr{\upsilon^{\pm1/p^{\infty}}}\text{ such that }~|a_0|> |a_i|\text{ for all }i\neq0     \right\}
\end{equation}

where equivalence relation is the following: $A(s,s^{-1})\sim A'(s,s^{-1})$ iff there exist invertible $m\times m$ matrices $U(s),V(s^{-1})$ over $K\lr{\upsilon^{1/p^\infty}}$ and $K\lr{\upsilon^{-1/p^\infty}}$ respectively with determinants of $U(s)$ and $V(s)$ given as 

\begin{align}
\det U(s)&=\left\{\text{ Elements of }K\lr{\upsilon^{1/p^{\infty}}}\text{ such that }~|a_0|> |a_i|\text{ for all }i\neq0     \right\}\\
\det V(s^{-1})&=\left\{\text{ Elements of }K\lr{\upsilon^{-1/p^{\infty}}}\text{ such that }~|a_0|> |a_i|\text{ for all }i\neq0     \right\}
\end{align}
such that

\begin{equation}
A'(s,s^{-1})=V(s^{-1})A(s,s^{-1})U(s).
\end{equation}
and 

\begin{equation}
 \deg\det  A'(s,s^{-1})=\deg\det  A(s,s^{-1})
\end{equation}
\end{proposition}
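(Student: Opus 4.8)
\begin{prf}[Proof sketch]
The plan is to follow the proof of Proposition~\ref{prop:martin_2_3} almost verbatim, with the polynomial rings $k[s]$, $k[s^{-1}]$, $k[s,s^{-1}]$ replaced by $\kl$, $K\lr{\upsilon^{-1/p^\infty}}$, $\klr$ respectively, and with the unit computations of Section~\ref{sec:Poly_and_power_series} substituted for the elementary facts about units of polynomial rings.

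First I would establish that every rank $m$ analytic vector bundle $E$ over $\pro$ becomes trivial after restriction to each of the two charts $U_1$ and $U_2$. This is the perfectoid counterpart of the freeness of finite projective modules over the coordinate ring of the affine line, and it is the one genuinely analytic input; I would obtain it from the sheafiness and the description of vector bundles on the perfectoid affine line $\aro$ coming from Theorem~3.6.15 of \cite{Kiran_Liu_1}. Granting this, a choice of trivializations $E|_{U_i}\simeq U_i\times\arom$ yields on the overlap a gluing isomorphism $(s,v)\mapsto(s^{-1},A(s,s^{-1})v)$ with $A$ a matrix over $\curly{O}(U_1\cap U_2)=\klr$; since this map is an isomorphism, $A\in GL_m(\klr)$ and hence $\det A(s,s^{-1})$ is a unit of $\klr$. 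By \eqref{eq:p_tate_unit} and the ensuing discussion of degree, such a unit is of the form $\upsilon^{n/p^b}\cdot u$ with $u$ of degree zero and $\abs{a_0}>\abs{a_i}$ for all $i\neq 0$, which is precisely the determinant constraint in the statement.

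Next I would check well-definedness up to the stated equivalence, together with the two directions of the bijection. Two trivializations of $E|_{U_1}$ (resp.\ $E|_{U_2}$) differ by an automorphism of $U_1\times\arom$ (resp.\ $U_2\times\arom$), which by Section~\ref{sec:Poly_and_power_series} is multiplication by a matrix $U(s)$ over $\kl$ (resp.\ $V(s^{-1})$ over $K\lr{\upsilon^{-1/p^\infty}}$) with determinant a unit of that ring, as recorded in \eqref{eq:det_trans_matrix1} and \eqref{eq:det_trans_matrix2}; composing the three maps as in \eqref{eq:equivrel} replaces $A$ by $A'=V(s^{-1})A(s,s^{-1})U(s)$. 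More generally an isomorphism $E\simeq E'$, combined with trivializations of both bundles, produces exactly such $U,V$ relating the two transition matrices, so isomorphic bundles give equivalent matrices. Conversely, given $A$ with $\det A$ of the prescribed shape, I would glue $U_1\times\arom$ to $U_2\times\arom$ along $U_1\cap U_2$ via $A$; sheafiness (again Theorem~3.6.15 of \cite{Kiran_Liu_1}) makes the result an analytic vector bundle on $\pro$, and equivalent matrices visibly produce isomorphic bundles. Assembling these gives the claimed bijection.

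Finally, for the degree assertion: Definition~\ref{def:degree} reads the degree off the zeroth term of the determinant, and by \eqref{eq:1p_tate_unit} the determinants of $U(s)$ and $V(s^{-1})$ have degree zero, so multiplicativity of $\det$ forces $\deg\det A'(s,s^{-1})=\deg\det A(s,s^{-1})$; thus degree is an invariant of the isomorphism class, and the last displayed condition is automatic from the ones above it. I expect the main obstacle to be the very first step, namely triviality of analytic vector bundles on the chart $\aro$ (the perfectoid analogue of $k[s]$ being a PID), since everything afterwards is bookkeeping with the unit descriptions already in hand.

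\end{prf}
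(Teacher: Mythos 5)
Your proposal follows essentially the same route as the paper: trivialize the bundle on the two charts, read the transition matrix off the overlap and constrain its determinant by the unit computations of Section~\ref{sec:Poly_and_power_series}, quotient by the chart automorphisms $U(s)$, $V(s^{-1})$ to obtain the equivalence relation, and deduce the degree invariance from multiplicativity of the determinant. The one step you rightly flag as the genuine analytic input --- triviality of rank~$m$ analytic vector bundles on the chart \aro --- is precisely the step the paper itself asserts without proof, so your sketch is no less complete than the paper's own argument.
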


\section{Classification of Vector Bundles on $\pro$}
The classification of vector bundles over $\ds{P}^1$ depends upon the fact that there are only finitely many ways to partition an integer. But, this is no longer true for fractions. For example, consider the following non equivalent (and infinitely many) vector bundles with degree one.

\begin{equation}
 \begin{pmatrix}
 X^a & 0\\
  0 & X^b 
  \end{pmatrix}
\text{ such that }a+b=1 \text{ and } a,b\in[0,1]\cap\ds{Z}[1/p]
\end{equation}

A more subtle question is whether every vector bundle on $\pro$ splits as a sum of line bundles. This question was answered in the negative by Prof Kiran Kedlaya and communicated to me via email. The counterexample is mentioned in full detail in Lecture 3 of \cite[pp.80-81]{Kiran_AWS}.

\bibliographystyle{apalike}
\bibliography{qhe}
\end{document}